 \title{On the diameter of intersection graphs of finite groups}
\author{Melissa Lee} 
\address{School of Mathematics, Monash University, Clayton VIC 3800, Australia}
\email{melissa.lee@monash.edu}
\author{Kamilla Rekv\'enyi} 
\address{Imperial College London, 180 Queen's Gate, South Kensington, London, SW7 2AZ, United Kingdom}
\email{k.rekvenyi19@imperial.ac.uk}
\date{February 2024}
\newtheorem{theorem}{Theorem} 
\newtheorem*{conj*}{Conjecture}
\newtheorem{lemma}[theorem]{Lemma}
\newtheorem{thm}{Theorem}[section]
\theoremstyle{definition}
\newtheorem{rem}[thm]{Remark}
\begin{document}
\begin{abstract}
The intersection graph $\Delta_G$ of a finite group $G$ is a simple graph with vertices the non-trivial proper subgroups of $G$, and an edge between two vertices if their corresponding subgroups intersect non-trivially. These graphs were introduced by Cs\'ak\'any and Poll\'ak in 1969. In this paper we answer two long-standing open questions posed by Cs\'ak\'any and Poll\'ak concerning the diameter of intersection graphs. We prove some necessary conditions for a non-simple group to have an intersection graph of diameter 4. We also construct the first examples of non-simple groups and alternating groups whose intersection graphs have diameter 4.
\end{abstract}

\maketitle
Let $G$ be a finite group. The intersection graph $\Delta_G$ of $G$ is a simple graph with vertices the non-trivial proper subgroups of $G$, and an edge between two vertices if the corresponding subgroups intersect non-trivially. These graphs were introduced by Cs\'ak\'any and Poll\'ak \cite{csakany} in 1969, influenced by earlier work by Bos\'ak \cite{bosak} on analogous graphs for semigroups.
In their paper, Cs\'ak\'any and Poll\'ak prove that a finite non-simple group with disconnected intersection graph either has trivial centre and every proper subgroup is abelian, or it is a direct product of two cyclic groups of prime order. They further show that the intersection graph of every other 
non-simple finite group has $\mathrm{diam}(\Delta_G)\leq 4$ with equality only if $G$ is an almost simple group of the form $G = S\rtimes C_p$ for some non-abelian finite simple group and prime $p$. Freedman further showed that in fact, $p$ must be an odd prime \cite[p.83]{saulthesis}. Despite this reduction, no example of a non-simple group with intersection graph of diameter 4 has been found to date. We change this, by presenting a group whose intersection graph has diameter 4, as well as providing some necessary conditions for any further examples.

\begin{theorem}
\label{thm:nonsimple}
Suppose $G$ is a non-simple group with connected intersection graph $\Delta_G$. Then $\mathrm{diam}(\Delta_G) \leq 4$ with equality only if $G$ is almost simple with socle $G_0$, $G=\langle G_0, g\rangle$, where $g$ is a diagonal automorphism of odd prime order $p$, and one of the following holds.
\begin{enumerate}
    \item $G_0 = \mathrm{PSL}_n(q)$ with $n=p$ prime, and $n\mid q- 1$;
    \item $G_0 = \mathrm{PSU}_n(q)$ with $p\mid (n,q+1)$, and either $n=p$ prime, or $n=2p$;
    \item $G_0 = E_6(q)$ or ${}^2E_6(q)$ with $q \equiv \pm 1 \mod 3$ respectively and $p=3$.
\end{enumerate}
Moreover, if $G=\mathrm{PGU}_5(4)$, then $\mathrm{diam}(\Delta_G) = 4$.
\end{theorem}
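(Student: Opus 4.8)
The plan is to reduce the statement to a question about generating pairs and then to argue over the finite simple groups. By Cs\'ak\'any--Poll\'ak \cite{csakany} and Freedman \cite{saulthesis} (quoted above) we may assume $G=\langle G_0,g\rangle$ is almost simple with socle $G_0$, that $g$ has odd prime order $p$ modulo $G_0$, and that $\Delta_G$ is connected, so that $\mathrm{diam}(\Delta_G)\le 4$ already holds; only the case of equality is at issue. The key reduction is that equality is detected by cyclic subgroups of prime order: one checks that $\mathrm{dist}(H,K)\le 3$ if and only if some prime-order $x\in H$ and $y\in K$ satisfy $\mathrm{dist}(\langle x\rangle,\langle y\rangle)\le 3$ (a length-$\le3$ walk on one side induces one on the other), so a distance-$4$ pair of subgroups always yields a distance-$4$ pair of prime-order cyclic subgroups. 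For prime-order $a,b$ one then has the clean criterion
\[
\mathrm{dist}(\langle a\rangle,\langle b\rangle)=4 \iff \langle a,z\rangle=G \ \text{or}\ \langle b,z\rangle=G \ \text{ for every } z\in G\setminus\{1\},
\]
equivalently, the maximal subgroups of $G$ containing $a$ and those containing $b$ have only the identity in common. Taking $z=b$ forces $\langle a,b\rangle=G$, so not both of $a,b$ lie in $G_0$; say $a\notin G_0$, whence $|a|=p$. Thus $\mathrm{diam}(\Delta_G)=4$ exactly when such a pair $(a,b)$ exists.

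For the exclusion direction I would show that no such pair exists unless $G$ appears in (1)--(3); equivalently, for every prime-order $a\in G\setminus G_0$ and prime-order $b\in G$ there is $1\ne z\in M\cap M'$ for some maximal subgroups $M\ni a$, $M'\ni b$. The mechanism is that $a$ always has large proper overgroups: since $[G:G_0]=p$, any subgroup containing $a$ is $T\langle a\rangle$ for an $\langle a\rangle$-invariant $T\le G_0$, and $1\ne C_{G_0}(a)<G_0$ always (a $p$-element acting on a non-abelian simple group has nontrivial centraliser, by Thompson in the coprime case and a Sylow argument otherwise), so $C_{G_0}(a)\langle a\rangle$ and, more usefully, $N_G(M_0)$ for any $\langle a\rangle$-invariant maximal $M_0\le G_0$ are proper overgroups of $a$. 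When $g$ is a field, graph or graph--field automorphism, $C_{G_0}(g)$ is a subfield subgroup or a group of smaller or other Lie type and $\langle a\rangle$ normalises a rich family of subgroups of $G_0$; the Aschbacher and Liebeck--Seitz descriptions of maximal subgroups then produce the common nontrivial $z$, so $\mathrm{diam}(\Delta_G)\le 3$. When $g$ is diagonal, a prime-order diagonal representative $a$ acts on the natural module with an eigenspace decomposition and $C_{G_0}(a)$ is a product of smaller classical groups inside a reducible subgroup; the same analysis yields $\mathrm{diam}(\Delta_G)\le 3$ unless $C_{G_0}(a)$ is forced to be ``small and isolated'', which happens precisely for the pairs $(G_0,p)$ in the list: $\mathrm{PSL}_n(q)$ with $n=p$ prime and $n\mid q-1$; $\mathrm{PSU}_n(q)$ with $p\mid(n,q+1)$ and $n\in\{p,2p\}$; and $E_6(q),{}^2E_6(q)$ with $p=3$ the only odd prime admitting a diagonal automorphism (so $q\equiv\pm1\mod 3$ respectively). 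This CFSG-driven case analysis --- and especially the delicate control of centralisers of diagonal elements and their maximal overgroups in the classical cases --- is the main obstacle of the proof.

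For the final assertion let $G=\mathrm{PGU}_5(4)$, $G_0=\mathrm{PSU}_5(4)$, so $[G:G_0]=(5,4+1)=5=p$ and $n=p=5$, placing $G$ in case (2). I would exhibit a pair $(a,b)$ as above: take $a$ of order $5$ in the non-trivial coset of $G_0$ coming from a diagonal automorphism --- for instance the image of $\mathrm{diag}(\zeta,1,1,1,1)\in\mathrm{GU}_5(4)$ with $\zeta$ of order $5$, whose centraliser $C_{G_0}(a)=N_{G_0}(\langle a\rangle)$ is the maximal subgroup $\mathrm{Stab}_{G_0}(\langle e_1\rangle)$ of type $\mathrm{GU}_1(4)\perp\mathrm{GU}_4(4)$ --- and take $b$ of order $5$ in a position transverse to $a$, i.e.\ chosen so that the invariant subspaces defining $b$'s maximal overgroups are in general position relative to those of $a$. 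Using the known list of maximal subgroups of $\mathrm{PGU}_5(4)$ (Bray--Holt--Roney-Dougal), one enumerates all maximal overgroups $M\ni a$ and $M'\ni b$ --- noting that order-$5$ elements sit in several maximals (parabolics, non-degenerate point stabilisers, imprimitive subgroups, a Singer-type normaliser) --- and checks $M\cap M'=1$ for every such pair; since these subgroups are large and rich in $2$- and $5$-elements, this pairwise-intersection check is the delicate point and is most safely carried out in \textsc{Magma} or \textsc{Gap}. Once it is verified, $\mathrm{dist}(\langle a\rangle,\langle b\rangle)=4$ and hence $\mathrm{diam}(\Delta_G)=4$; the computation is feasible here because $\mathrm{PGU}_5(4)$ is among the smallest groups in the list, which is why it is the example exhibited.
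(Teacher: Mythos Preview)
Your reductions match the paper's: the Cs\'ak\'any--Poll\'ak/Freedman reduction to $G=G_0\rtimes C_p$ with $G_0$ of Lie type, the passage to prime-order cyclic subgroups, and the criterion that distance $4$ means every proper overgroup of $a$ meets every proper overgroup of $b$ trivially. (The paper actually shows by a short counting argument that \emph{both} $a$ and $b$ must lie outside $G_0$; you only deduce one, though the case $b\in G_0$ is then easy via $C_{G_0}(a)\le G_0$.) The computational verification for $\mathrm{PGU}_5(4)$ is also the same in spirit.

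The genuine gap is in your mechanism for excluding the non-diagonal and composite-$n$ diagonal cases. You propose to work with $C_{G_0}(a)\langle a\rangle$ and then invoke ``the Aschbacher and Liebeck--Seitz descriptions of maximal subgroups'' to produce a common nontrivial $z$, but nothing in the sketch explains why some overgroup of $a$ and some overgroup of $b$ must intersect nontrivially --- two conjugate subfield subgroups, for instance, can meet trivially, and citing the maximal-subgroup classification is not yet an argument. The paper's approach is different and concrete: it proves a lemma that every non-diagonal outer automorphism of odd prime order normalises a parabolic maximal subgroup of $G_0$, and that any two such automorphisms normalise \emph{conjugate} parabolics. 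Since two conjugate parabolics always intersect nontrivially (a Bruhat/building fact, with explicit references to Carter and Gille--Guralnick), this gives the length-$\le 3$ path uniformly. For $\mathrm{PSL}_n(q)$ with $n$ composite, diagonal elements also lie in conjugate parabolics (they fix a $p$-dimensional subspace); for $\mathrm{PSU}_n(q)$ with $n$ composite, the paper uses an order estimate $|M|>|G|^{1/2}$ on stabilisers of non-degenerate decompositions of unequal dimension to force $|M_1\cap M_2|>1$. These parabolic and order arguments are the missing ideas that actually pin down the list (1)--(3); as written, your centraliser route does not close the proof.
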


It should be noted that the conditions of Theorem \ref{thm:nonsimple} are necessary but not sufficient. For example, $G= \mathrm{PGU}_3(5)$ has an intersection graph of diameter 3.

Similar questions concerning the diameter of intersection graphs of non-abelian finite simple groups have also been considered. In 2010, Shen \cite{shen} proved that here the intersection graph is always connected and asked whether there is an upper bound for the diameter that holds for all intersection graphs of finite simple groups. In \cite{herzog}, Herzog, Longobardi and Maj showed that for a finite simple group $G,$    $\mathrm{diam}(\Delta_G) \leq 64.$ This bound was later reduced to $28$ by Ma in \cite{ma} and to $5$ by Freedman in \cite{saul}. The latter result is shown to be sharp with equality for $G=\mathbb{B},$ $\mathrm{PSU}_7(2)$ and possibly further $\mathrm{PSU}_n(q)$ examples with $n$ an odd prime \cite{saul}. In particular, in all other cases, $\mathrm{diam}(\Delta_G) \leq 4$. In \cite{csakany}, Csákány and Pollák showed that for alternating groups $A_n$, we have $3\leq \mathrm{diam}(\Delta_{A_n})\leq 4$, however it has been unclear whether the upper bound is sharp. We prove that it is.  

\begin{theorem}
    \label{thm:An}
    Let $G=A_n$, where $n$ is a prime not equal to 11 or $(q^d-1)/q-1$ for some prime power $q$ and positive integer $d$. The intersection graph of $G$ has diameter 4.
\end{theorem}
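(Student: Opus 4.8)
The plan is to produce, for $G=A_n$ with $n$ as in the statement, a pair of cyclic subgroups generated by $n$-cycles that are at distance exactly $4$ in $\Delta_{A_n}$; since $\mathrm{diam}(\Delta_{A_n})\le 4$ is already known, it is enough to find one pair at distance $\ge 4$. Fix an $n$-cycle $\sigma$. As $n$ is an odd prime, $\langle\sigma\rangle$ is a Sylow $n$-subgroup of $A_n$, it has $(n-2)!$ conjugates, and $M:=N_{A_n}(\langle\sigma\rangle)=\mathrm{AGL}_1(n)\cap A_n$ is a Frobenius group of order $n(n-1)/2$ with kernel $\langle\sigma\rangle$. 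The structural input, and the place where the hypotheses on $n$ enter, is: \emph{every proper subgroup $H$ of $A_n$ containing a Sylow $n$-subgroup $Q$ satisfies $Q\trianglelefteq H$, hence $H\le N_{A_n}(Q)$}. Indeed such an $H$ is transitive of prime degree $n$ with $Q$ a Sylow $n$-subgroup, so by Burnside's theorem on transitive groups of prime degree either $Q\trianglelefteq H$ or $H$ is $2$-transitive of almost simple type; in the latter case the classification of $2$-transitive groups of prime degree, together with $n\ne 11$ and $n\ne (q^{d}-1)/(q-1)$, leaves only $H\in\{A_n,S_n\}$, impossible for $H<A_n$. (The prime $n=23$ is the one value not excluded by the hypotheses for which an extra $2$-transitive group, $M_{23}$, occurs; this case will have to be treated separately, with $M_{23}$ taking over the role of $M$ below.)

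Granting this, I would examine all short paths in $\Delta_{A_n}$ between $\langle\sigma\rangle$ and $\langle\tau\rangle$, where $\tau$ is a further $n$-cycle with $\langle\tau\rangle\ne\langle\sigma\rangle$. A path of length $1$ would force $\langle\sigma\rangle=\langle\tau\rangle$. A path of length $2$ would require a proper nontrivial subgroup containing both $\sigma$ and $\tau$; such a subgroup contains $\langle\sigma,\tau\rangle$ and, being a proper overgroup of $\langle\sigma\rangle$, lies in $M$, whose unique Sylow $n$-subgroup is $\langle\sigma\rangle$ — forcing $\langle\tau\rangle=\langle\sigma\rangle$, a contradiction. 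A path of length $3$, say $\langle\sigma\rangle-C-D-\langle\tau\rangle$, has $C\le M$ and $D\le N_{A_n}(\langle\tau\rangle)$ with $C\cap D\ne 1$, whence $M\cap N_{A_n}(\langle\tau\rangle)\ne 1$; conversely, if $M\cap N_{A_n}(\langle\tau\rangle)\ne 1$, then $C=M$ and $D=N_{A_n}(\langle\tau\rangle)$ give such a path. Hence $\mathrm{dist}_{\Delta_{A_n}}(\langle\sigma\rangle,\langle\tau\rangle)=4$ if and only if $N_{A_n}(\langle\sigma\rangle)\cap N_{A_n}(\langle\tau\rangle)=1$, and the whole problem reduces to exhibiting an $n$-cycle $\tau$ with this property.

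To find such a $\tau$ I would count. Call a Sylow $n$-subgroup $Q$ \emph{bad} if $M\cap N_{A_n}(Q)\ne 1$. An element of order $n$ in $M$ generates $\langle\sigma\rangle$ and normalises no Sylow $n$-subgroup other than $\langle\sigma\rangle$, so if $Q\ne\langle\sigma\rangle$ is bad then some nontrivial $n'$-element $h\in M$ normalises $Q$, necessarily of order $m\mid (n-1)/2$ with $m\ge 2$ and of cycle type $(m^{(n-1)/m},1)$ in $S_n$. For a fixed such $h$, any $h$-invariant Sylow $n$-subgroup $Q$ satisfies $\langle h\rangle\le N_{S_n}(Q)$, which is a conjugate of $\mathrm{AGL}_1(n)$ with $Q$ as its normal Sylow $n$-subgroup; since the number of conjugates of any subgroup of $S_n$ containing a fixed element $h$ is at most $|C_{S_n}(h)|$, there are at most $|C_{S_n}(h)|=m^{(n-1)/m}\,((n-1)/m)!$ such $Q$. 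Summing over the fewer than $n(n-1)/2$ possibilities for $h$ and bounding each term by $2^{(n-1)/2}((n-1)/2)!$, the number of bad Sylow $n$-subgroups is at most of order $\tfrac{n(n-1)}{2}\,2^{(n-1)/2}((n-1)/2)!$, far below the total number $(n-2)!$ for every admissible prime (the smallest being $n=19$). Hence a non-bad $\tau$ exists, and $\langle\sigma\rangle,\langle\tau\rangle$ lie at distance $4$.

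The steps I expect to be the main obstacle are: first, the exceptional prime $n=23$, where $M_{23}$ must be folded into both the structural lemma and the path analysis (or the case handled directly, possibly by computation); and second, making the final estimate fully rigorous — carefully justifying the bound on the number of $h$-invariant Sylow $n$-subgroups and verifying the resulting inequality for the smallest admissible primes rather than only asymptotically.
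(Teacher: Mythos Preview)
Your reduction is identical to the paper's: both invoke the classification of transitive groups of prime degree to conclude that, for $n\neq 23$, the unique maximal overgroup of a Sylow $n$-subgroup $\langle\sigma\rangle$ in $A_n$ is $N_{A_n}(\langle\sigma\rangle)$, and hence that $\mathrm{dist}(\langle\sigma\rangle,\langle\tau\rangle)=4$ if and only if $N_{A_n}(\langle\sigma\rangle)\cap N_{A_n}(\langle\tau\rangle)=1$. Both also set $n=23$ aside for a direct (computational) check involving $M_{23}$.

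The genuine divergence is in how the witness pair is produced. The paper writes down an explicit pair $g_a=(1,2,\dots,n)$ and $g_b=g_a^{(n-1,n)}$ and verifies by a short bare-hands cycle argument that any $h\in N_{A_n}(\langle g_a\rangle)\cap N_{A_n}(\langle g_b\rangle)$ must be trivial. You instead run a counting argument: bounding, for each nontrivial $n'$-element $h\in M$, the number of Sylow $n$-subgroups whose normaliser contains $h$ by $|C_{S_n}(h)|$, and summing to get fewer than $(n-2)!$ ``bad'' Sylows. Your bound is valid (the inequality $|\{K^g:h\in K^g\}|\le |C_G(h)|$ follows from $|h^G\cap K|\le |K|\le |N_G(K)|$), and the uniform estimate $|C_{S_n}(h)|\le 2^{(n-1)/2}((n-1)/2)!$ for $m\ge 2$ holds because $m^{(n-1)/m}((n-1)/m)!$ is decreasing in $m$; the resulting inequality is already comfortable at $n=19$. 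So the argument goes through. The trade-off is that the paper's approach is constructive and avoids any asymptotic estimate, while yours is cleaner conceptually and sidesteps the somewhat fiddly cycle bookkeeping, at the cost of being non-explicit and requiring the monotonicity of the centraliser bound to be spelled out.
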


The smallest example of a simple alternating group with an intersection graph of diameter 4 is $A_{13}$, while all smaller alternating groups have intersection graphs of diameter 3.

The remainder of the paper is divided into two sections. The first deals with the proof of Theorem \ref{thm:nonsimple}, and the second with Theorem \ref{thm:An}.
\subsection*{Acknowledgements} The first author acknowledges the support of an Australian Research Council Discovery Early Career Researcher Award (project number DE230100579). 
The second author was supported by the UK Engineering and Physical Sciences Research Council [grant number EP/W522673/1]. The authors would also like to thank Saul Freedman for introducing them to this problem, as well as Aluna Rizzoli for insightful conversations during the completion of this paper. 
The authors are also grateful to Martin Liebeck for carefully proofreading an earlier draft and highlighting opportunities to simplify some of the arguments.
\section{Non-simple groups with connected intersection graphs}
In this section we present our result on the intersection graphs of non-simple groups. As discussed in the introduction, our result settles a long-standing open question of Csákány and Pollák whether there exists a finite non-simple group whose intersection graph has diameter $4.$ We prove that such groups exist, and provide some necessary conditions for any further examples.

We first begin with some preliminary results. Note that this result is also in \cite{csakany}.
\begin{lemma}
\label{lem:diam}
Suppose $G$ is a finite group with a connected intersection graph with diameter $d\geq 1$. Then there exist cyclic subgroups of prime order $A,B < G$ at distance $d$.
\end{lemma}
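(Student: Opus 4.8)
The plan is to reduce an arbitrary pair of subgroups at distance $d$ to a pair of cyclic subgroups of prime order without decreasing the distance. Suppose $A, B < G$ are non-trivial proper subgroups with $d(A,B) = d$ in $\Delta_G$. Since $A$ is non-trivial, it contains an element $a$ of prime order, and likewise $B$ contains an element $b$ of prime order; set $A' = \langle a \rangle$ and $B' = \langle b \rangle$, which are cyclic of prime order and proper (as $d \geq 1$ forces $|G| > $ some bound ensuring proper subgroups of prime order exist — more carefully, $A', B'$ are proper because they are contained in the proper subgroups $A, B$).

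The key observation is that passing to a \emph{subgroup} can only increase distance in the intersection graph: if $H \leq K$ are both non-trivial proper subgroups, then any subgroup intersecting $H$ non-trivially also intersects $K$ non-trivially, so the neighbourhood of $H$ is contained in that of $K$, and hence $d(H, X) \geq d(K, X)$ for every vertex $X$. Applying this twice, $d(A', B') \geq d(A', B) \geq d(A, B) = d$. On the other hand $d$ is the diameter, so $d(A', B') \leq d$, giving equality. Thus $A = A'$ and $B = B'$ (after relabelling) are the required cyclic subgroups of prime order at distance exactly $d$.

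The only genuine point requiring care is that $A'$ and $B'$ are \emph{proper} non-trivial subgroups so that they are actually vertices of $\Delta_G$: non-triviality is clear since they have prime order, and properness follows because $A' \leq A < G$ and $B' \leq B < G$. (If $G$ itself had prime order it would have no vertices at all and $\Delta_G$ would be empty, contradicting $d \geq 1$, so this case does not arise.) I do not anticipate a substantial obstacle here; the statement is essentially the observation that distances are monotone under taking subgroups, combined with Cauchy's theorem to extract prime-order elements. One could alternatively phrase the monotonicity step as: the map $H \mapsto N_{\Delta_G}[H]$ on vertices is inclusion-preserving, which immediately yields the inequality on distances along any shortest path.
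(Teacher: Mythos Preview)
Your proof is correct and follows essentially the same approach as the paper: take a pair $H_1,H_2$ at distance $d$, pass to cyclic prime-order subgroups inside each, and use that any path between the smaller subgroups yields a path of no greater length between the larger ones. The paper phrases the monotonicity step by explicitly converting a path $A,A_1,\dots,A_{k-1},B$ into a walk $H_1,A_1,\dots,A_{k-1},H_2$, while you phrase it via inclusion of (closed) neighbourhoods; these are the same observation.
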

\begin{proof}
Suppose $H_1, H_2 < G$ are subgroups at distance $d$ and let $A$, $B$ be cyclic subgroups of $G$ of prime order with $A\leq H_1$ and $B\leq H_2$. Every subgroup of $G$ intersecting $A$ also intersects $H_1$, and every subgroup of $G$ intersecting $B$ also intersects $H_2$.  Hence if  $A_0$, $A_1, \dots ,A_k$ is a path with $A_0=A$ and $A_k=B,$ then  $H_1, A_1,\dots, A_{k-1},H_2$ is also path. Therefore, the distance between $H_1$ and $H_2$ provides a lower bound for the distance between $A$, $B,$ which, as the diameter is $d,$  is also equal to $d$.  
\end{proof}

We start by describing the groups $G$ such that $\Delta_G$ has diameter $2.$ Note that one direction of this result is also in \cite{ravi}.

\begin{lemma}
    Let $G$ be a finite group. Then $\mathrm{diam}(\Delta_G)=2$ if and only if $G$ does not have a generating pair of prime order elements.
\end{lemma}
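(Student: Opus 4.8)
The plan is to prove both directions by a direct translation between graph-distance statements and the existence of a generating pair of prime-order elements, using Lemma~\ref{lem:diam} for the harder direction. First, recall that by Lemma~\ref{lem:diam}, whenever $\Delta_G$ is connected with diameter $d$, there exist cyclic subgroups $A,B<G$ of prime order realizing the distance $d$; and conversely, the distance in $\Delta_G$ between any two subgroups is at most the distance between any pair of prime-order subgroups they contain. So the diameter is governed entirely by distances between prime-order subgroups.

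For the forward direction, suppose $\mathrm{diam}(\Delta_G)=2$. I would argue by contrapositive: if $G$ has a generating pair $x,y$ of prime-order elements, set $A=\langle x\rangle$, $B=\langle y\rangle$. These are distinct vertices of $\Delta_G$ (distinctness: if $A=B$ then $G=\langle x,y\rangle=A$ is cyclic of prime order, hence has no nontrivial proper subgroups and $\Delta_G$ is empty, contradicting $\mathrm{diam}=2$). If $A$ and $B$ were adjacent or equal, they would not generate unless $G$ itself is small; more carefully, if $A\cap B\neq 1$ then, as both have prime order, $A=B$, already excluded. If they were at distance $2$, there is a subgroup $C$ with $A\cap C\neq 1\neq C\cap B$; but again $A\cap C\neq 1$ forces $A\le C$ (prime order) and likewise $B\le C$, so $C\supseteq\langle A,B\rangle=G$, contradicting that $C$ is proper. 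Hence $A$ and $B$ are non-adjacent and not joined by a path of length $2$, so the distance between them is at least $3$ (or they lie in different components), contradicting $\mathrm{diam}(\Delta_G)=2$. Therefore $G$ has no generating pair of prime-order elements.

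For the converse, suppose $G$ has no generating pair of prime-order elements. First one checks $\Delta_G$ is nonempty and that $\mathrm{diam}(\Delta_G)\neq 0,1$: if $G$ is cyclic of prime order there are no proper nontrivial subgroups, but such a $G$ is generated by one prime-order element $x$ together with $x$ again (or we handle this degenerate case by noting the hypothesis should be read as $G$ not of prime order); if $\mathrm{diam}=1$ then any two prime-order subgroups $A\ne B$ would intersect nontrivially, forcing $A=B$, so $G$ has a unique subgroup of prime order, which by a standard fact means $G$ is cyclic or generalized quaternion—and in the cyclic prime-power case $G=\langle x\rangle$ for a single element, handled separately, while the generalized quaternion case is generated by two elements of order $4$, not prime. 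The substantive claim is that the diameter is at most $2$: take any two vertices $H_1,H_2$ and prime-order subgroups $A\le H_1$, $B\le H_2$. Then $\langle A,B\rangle\le G$ is a proper subgroup of $G$, since $A$ and $B$ are generated by prime-order elements and by hypothesis no two such elements generate $G$; moreover $\langle A,B\rangle$ is nontrivial and contains both $A$ and $B$, hence is a common neighbour (or equals one of them), giving a path $H_1, \langle A,B\rangle, H_2$ of length at most $2$. Combined with the lower bound, $\mathrm{diam}(\Delta_G)=2$.

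The main obstacle is bookkeeping the degenerate cases cleanly—groups of prime order (empty graph), and groups with a unique prime-order subgroup (diameter $\le 1$)—to make sure the statement "$G$ does not have a generating pair of prime-order elements" is exactly equivalent to $\mathrm{diam}=2$ rather than $\mathrm{diam}\le 2$; this likely requires interpreting a "generating pair" as an ordered or unordered pair of (not necessarily distinct) prime-order elements, so that cyclic groups of prime order are correctly excluded on the right-hand side. Everything else reduces to the elementary observation that a prime-order subgroup meeting another subgroup nontrivially is contained in it, which is what makes $\langle A,B\rangle$ the canonical midpoint of a length-$2$ path.
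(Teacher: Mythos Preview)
Your core argument is the same as the paper's: both directions hinge on the observation that two prime-order subgroups $A,B$ lie at distance at most $2$ if and only if $\langle A,B\rangle$ is a proper subgroup, with Lemma~\ref{lem:diam} transferring this to arbitrary vertices. The paper argues the forward direction directly rather than by contrapositive, but the content is identical; it is also terser and does not address the degenerate cases you raise.

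Your caution about the cases $\mathrm{diam}(\Delta_G)\le 1$ is well placed, but your attempted resolution does not close the gap. A generalized quaternion group, or a cyclic $p$-group of order at least $p^2$, has a unique subgroup of prime order and hence $\mathrm{diam}(\Delta_G)\le 1$, yet has no generating pair of prime-order elements---so the biconditional fails as literally stated. Observing that $Q_8$ is generated by two elements of order $4$ only confirms that it lacks a prime-order generating pair, which is the wrong direction for what you need here. The paper's proof in fact silently establishes only $\mathrm{diam}(\Delta_G)\le 2$ in the backward direction, and the lemma is best read that way (or with the tacit hypothesis that $G$ has at least two subgroups of prime order); this is a wrinkle in the statement itself rather than a defect in your strategy.
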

\begin{proof}
If $\mathrm{diam}(\Delta_G)=2,$ then for any two distinct cyclic subgroups of prime order, $A=\langle g_a\rangle $ and  $B=\langle g_b\rangle,$ there is a proper subgroup that contains both of them. The smallest such subgroup is $\langle g_a,g_b\rangle,$ which therefore is not equal to $G,$ so no pairs of prime order elements generate $G.$ 

    If $G$ does not have a generating pair of prime order elements, then for any two prime order elements $g_a,$ $g_b,$ the subgroup they generate, $\langle g_a,g_b\rangle,$ is a proper subgroup of $G.$ Hence there is a path of length two between $A=\langle g_a\rangle $ and  $B=\langle g_b\rangle,$ in $\Delta_G$, so by Lemma \ref{lem:diam},  $\mathrm{diam}(\Delta_G)=2.$
\end{proof}


The argument in the proof of Theorem \ref{thm:nonsimple} relies heavily on outer automorphisms normalising certain large parabolic subgroups of groups of Lie type. For this, we require the following lemma.

\begin{lemma}\label{nondiagaut}
    Let $S$ be a finite non-abelian simple group of Lie type. Then any non-diagonal outer automorphism of odd prime order of $S$ normalizes a parabolic maximal subgroup of $S$. 
    Moreover, any two of these automorphisms normalise parabolic maximal subgroups that are conjugate. 
\end{lemma}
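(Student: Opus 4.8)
The plan is to use the description of outer automorphisms of groups of Lie type and the fact that the group generated by inner and diagonal automorphisms acts transitively on the parabolic subgroups of a fixed type, while graph and field automorphisms act in a controlled way on the set of parabolic subgroups (indexed, over the algebraic closure, by subsets of nodes of the Dynkin diagram). Since we are told the automorphism $\sigma$ has odd prime order and is non-diagonal, after composing with a suitable inner-diagonal automorphism we may assume $\sigma$ is a field automorphism, a graph automorphism, or a graph-field automorphism of odd prime order $p$; the classification of $\mathrm{Out}(S)$ for $S$ of Lie type (as in the ATLAS or Gorenstein--Lyons--Solomon) tells us exactly when each type can have odd prime order.

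First I would dispose of the field-automorphism case: a standard field automorphism of $S = S(q)$ with $q = r^{pe}$ fixes setwise every $\sigma$-stable subsystem subgroup defined over the prime field, and in particular stabilises a Borel subgroup $B$ of $S$, hence stabilises every parabolic $P \geq B$; any maximal parabolic will do. Next, the graph-automorphism case of odd prime order only occurs for $p = 3$ and $S$ of type $D_4$ (triality), and for $p = 2$ otherwise, so in the odd-order setting only triality on $D_4$ arises; here I would exhibit an explicit triality-stable node set in the $D_4$ diagram — the central node is fixed by the order-3 graph automorphism — so the corresponding maximal parabolic is $\sigma$-invariant. (Graph-field automorphisms combine these and are handled the same way, noting that their order is a multiple of the field part.) This settles existence of a $\sigma$-invariant maximal parabolic.

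For the ``moreover'' clause, I would argue as follows. Suppose $\sigma_1, \sigma_2$ are two non-diagonal outer automorphisms of odd prime order, normalising maximal parabolics $P_1, P_2$ respectively. By the above, $P_i$ is conjugate (under $\mathrm{Aut}(S)$, in fact under $S$) to a ``diagram-standard'' maximal parabolic, and the $\sigma_i$-stable ones correspond to node subsets fixed by the image of $\sigma_i$ in the symmetry group of the Dynkin diagram. Since all non-diagonal outer automorphisms of a fixed odd prime order induce, up to the inner-diagonal part, the same diagram symmetry (the field part acts trivially on nodes, and for odd $p$ the only graph part is triality on $D_4$, whose order-3 elements are all conjugate in $\mathrm{Sym}(3)$), the sets of admissible node subsets coincide, and one checks directly (again using that $\mathrm{Inndiag}(S)$ is transitive on parabolics of each type) that $P_1$ and $P_2$ may be taken in the same $S$-conjugacy class. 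I would make the uniform choice: the parabolic corresponding to the node set fixed pointwise by the diagram symmetry induced by $\sigma$, which exists and is non-empty in all relevant cases.

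The main obstacle I anticipate is bookkeeping rather than conceptual: making the reduction ``$\sigma$ is inner-diagonal times (field/graph/graph-field)'' precise while keeping track of orders, since a non-diagonal automorphism of prime order need not itself be a pure field or pure graph automorphism — it is only equal to one modulo $\mathrm{Inndiag}(S)$, and one must check that the $\mathrm{Inndiag}$-coset representative can be chosen of the same prime order, or else argue directly that the non-diagonal part alone suffices to normalise a parabolic in the full coset. Handling the low-rank and small-field exceptions (where $\mathrm{Out}(S)$ or the parabolic structure degenerates, e.g. $S = \mathrm{PSL}_2(q)$ has no graph automorphism and only one class of maximal parabolics, or coincidences like $A_1 \cong B_1 \cong C_1$) will also require a short case check, but these are finite in number and each is immediate.
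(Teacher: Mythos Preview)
Your proposal is correct and follows essentially the same route as the paper: reduce a non-diagonal outer automorphism of odd prime order to a field, graph, or graph-field automorphism; observe that field automorphisms normalise every standard parabolic while the only odd-order graph case is triality on $D_4$ (and ${}^3D_4$), where the central-node parabolic $P_2$ is stabilised; and for the ``moreover'' clause make the uniform choice of $P_2$ (respectively any fixed parabolic in the pure field case).

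The one point worth flagging is exactly the obstacle you anticipated. The paper does not work modulo $\mathrm{Inndiag}(S)$ and then repair orders; instead it invokes \cite[Thm.~2.5.12, Defn.~2.5.13]{gls} to assert directly that a non-diagonal outer automorphism of odd prime order is \emph{conjugate in $\mathrm{Aut}(S)$} to an element of $\Phi_S\Gamma_S$. This is stronger than ``equal modulo $\mathrm{Inndiag}(S)$'' and immediately dissolves your bookkeeping concern: once $g$ is conjugate to some $\phi\in\Phi_S\Gamma_S$, the parabolic normalised by $\phi$ is carried by the conjugating element to one normalised by $g$. So your worry is legitimate but has a clean one-line resolution via the GLS conjugacy classification, and with that in hand your argument and the paper's coincide.
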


\begin{proof}
First note that $\mathrm{Aut}(S)$ is a split extension of $\mathrm{InnDiag}(S)$ by a group $\Phi_S\Gamma_S,$ where $\Phi_S$ and $\Gamma_S$ contain field and graph automorphisms, respectively. By \cite[Thm. 2.5.12, Defn. 2.5.13]{gls}, any non-diagonal outer automorphism $g$ of $S$ of odd prime order is a field automorphism, or $S\cong \mathrm{P}\Omega^+_8(q)$ or ${}^3D_4(q)$ and $g$ is a graph or graph-field automorphism. In all cases, $g$ is conjugate to an element of $\Phi_S\Gamma_S$.

Now we show that for any two non-diagonal outer automorphisms $g_1$ and $g_2$ there is a parabolic subgroup $P$ such that both $g_1$ and $g_2$ normalize a conjugate of $P$.

If $g_1$ and $g_2$ are field automorphisms, then, in particular, they are each conjugate to an element of $\Phi_S$ by \cite[Defn. 2.5.13]{gls}. Now $\Phi_S$ normalises all parabolic subgroups of $S$ by \cite[\S2.6]{gls}, so the claim follows.
If $g_1$ and $g_2$ are graph or graph-field automorphisms, then $S\cong \mathrm{P}\Omega^+_8(q)$ or ${}^3D_4(q)$ and the parabolic maximal subgroup $P_2$ of $S$ is normalized by $\Gamma_S.$ 
Hence $P_2$ is normalized by $\Phi_S\Gamma_S,$ and as all non-diagonal automorphisms are conjugate to an element of $\Phi_S\Gamma_S,$ $g_1$ and $g_2$ both normalize a conjugate of $P_2,$ as required.
\end{proof}


\begin{rem} By \cite[Prop. 2.6.9]{gls} (see also \cite[Chap 8.]{colva} and \cite[Tables 3.5A--G]{KL}), an analogue of Lemma \ref{nondiagaut} holds for diagonal automorphisms that are conjugate to the so-called ``standard" diagonal automorphisms (cf. \cite[\S1.7.1]{colva}). However, not every diagonal automorphism is conjugate to a standard one, and therefore not always contained in a parabolic subgroup, as we shall see in the proof of Theorem \ref{thm:nonsimple}.
\end{rem}
\subsection{Proof of Theorem \ref{thm:nonsimple}}
\begin{proof}
    By \cite{csakany}, if $\mathrm{diam}(\Delta_G)=4,$ then $G=S\rtimes C_p,$ where $S$ is a non-abelian simple group and $p$ is an odd prime.  

Hence $S$ cannot be a sporadic group or an alternating group with $n=5$ or $n\geq 7$, since the outer automorphism group has size at most two in each case. If $S=A_6$, then $|\mathrm{Out}(S)| = 4$, so again there is no suitable $G$.
Therefore, $S$ is a group of Lie type. Write $G = S\rtimes \langle g \rangle$, where $g$ has prime order $p$. 

Suppose that $A$ and $B$ are subgroups at distance 4 in $\Delta_G$. By Lemma \ref{lem:diam}, we can assume that $A$, $B$ are cyclic of prime order.
First suppose that $A$ or $B$ is a subgroup of $S$.
Without loss, we can assume $A\leq S.$ 
 Since the graph is connected, $B$ is contained in a maximal subgroup of $G$ that has size strictly larger than $\vert B \vert,$ so either $B$ is contained in a conjugate of $S$ or $B$ has order $p$ and is properly contained in a maximal subgroup $M_B$ of $G$ with $|M_B| >p.$ If $M_B$ and $S$ had a non-trivial intersection then there would be a path between A and B of length 3, contradicting our assumption that their distance is 4. Hence we must have $|S\cap M_B| = 1$. But then $|SM_B| = |S||M_B|> p|S| = |G|$, a contradiction. 
 Hence we can assume that neither $A$ nor $B$ are contained in a conjugate of $S$ and hence they both have order $p.$ We have several cases to consider.

 The first case is when $G$ does not contain any diagonal automorphisms. By Lemma \ref{nondiagaut}, $A$ and $B$ each normalize a conjugate of a fixed parabolic maximal subgroup $P<S$ and $P\rtimes \langle g\rangle$ is maximal in $G$.  

 Our second case is when $S=PSL_n(q)$ such that $n$ is composite and $A$ and $B$ are generated by diagonal automorphisms. Then by \cite[Prop. 3.2.2]{burnessgiudici} $A$ and $B$ both fix a $p$-dimensional subspace of the natural module of $PSL_n(q).$ In this case are exactly the stabilizers of $p$-dimensional subspaces by \cite[Section 3.3.3]{wilsonbook}, so they are contained in conjugate parabolic subgroups.

By \cite[Lemma 4.1.5]{kalathesis}, \cite[Corollary 1.3]{gilleguralnick} and \cite[Thm 2.8.7]{carter}, the intersection of two conjugate parabolics is always non-trivial. Hence in both of the above cases there is a path of length at most $3$ between any two prime order subgroups of $G,$ so $\mathrm{diam}(\Delta_G)\leq 3.$

Our third case is $S=\mathrm{PSU}_n(q)$ with $n$ composite and $A$, $B$ generated by diagonal automorphisms of order $p$ dividing $(n,q+1)$. By \cite[Prop. 3.3.3]{burnessgiudici}, the preimage of each of these elements in $\mathrm{GU}_n(q)$ each fixes an orthogonal decomposition of the natural module into non-degenerate subspaces. Hence, each is contained in a maximal subgroup $M$ of $G$ whose preimage in $\mathrm{GU}_n(q)$
preserves an orthogonal decomposition of the natural module into two complementary non-degenerate subspaces of dimension $m$ and $n-m$ for some $1\leq m\leq n/2$ respectively. By \cite[Tables 2.1.C, 2.1.D, Lemma 4.1.1]{KL}, we deduce that if $m<n/2$, then $|M|>|G|^{1/2}$. Therefore, if $A$ and $B$ are generated by elements that fix a decomposition of the natural module into a pair of non-degenerate subspaces of unequal dimension, then the maximal subgroups $M_1\geq A$, $M_2\geq B$ of $G$ fixing these decompositions must have $|G|<|M_1||M_2| = |M_1M_2||M_1\cap M_2| = |G||M_1\cap M_2|$. This implies $|M_1\cap M_2|>1$, so $A$, $B$ are at distance 3 in $\Delta_G$.

With these conditions in mind, we use {\sc Magma} \cite{magma} to prove that the diameter of the intersection graph of $\mathrm{PGU}_4(5)$ is 4. 
\end{proof}

\begin{rem}
 Note that the only cases remaining when $S=\mathrm{PSU}_n(q)$ in the proof of Theorem \ref{thm:nonsimple} are those where $g$ is a diagonal automorphism of prime order $p$ whose preimage in $\mathrm{GU}_n(q)$ does not preserve a decomposition of the natural module into two non-degenerate subspaces of unequal dimension. By \cite[Prop. 3.3.3]{burnessgiudici}, this implies that either $n$ is prime, or $n=2p$.
\end{rem}

\section{Intersection graphs of alternating groups with prime degree}
We now turn our attention to the intersection graphs of finite simple alternating groups of prime degree. As mentioned in the introduction, Cs\'ak\'any and Poll\'ak \cite[p. 246]{csakany} proved that $3\leq \mathrm{diam}(\Delta_{A_n}) \leq 4$, 
with
$\mathrm{diam}(\Delta_{A_n})=3$
if $n$ is composite.

As far as we are aware, no example of a finite simple alternating group whose intersection graph has diameter 4 has been discovered to date. We present a family of such groups below.

\subsection{Proof of Theorem \ref{thm:An}}
\begin{proof} We begin by noting that $n \neq 5,7,13,17$, since each can be written as $(q^d-1)/(q-1)$ for some prime power $q$ and positive integer $d\geq 2$. We also note that $n=11$ is excluded by assumption (although we do deal with this case later in Lemma \ref{a11}).

Let $n=23.$ In this case an element of order $23$ is contained in two maximal subgroups isomorphic to $M_{23}$. Using {\sf GAP} \cite{GAP4} we verify that subgroups generated by the $23$-cycles \begin{align*}
    g_a & = (1,13,16,4,22,2,8,20,21,6,17,9,19,14,18,11,15,23,12,5,3,7,10)\\
    g_b &= (1, 2, 3, 4, 5, 6, 7, 8, 9, 10, 11, 12, 13, 14, 15, 16, 17, 18, 19, 20, 21, 22, 23)
\end{align*}
and are of distance 4 by confirming that the $M_{23}$ maximal subgroups they are each contained in intersect trivially. Hence  $\mathrm{diam}(\Delta_{A_{23}})=4.$

For the remainder of the proof, we therefore suppose either $n=19$, or $n\geq 29$. 
Let $A = \langle g_a\rangle$ and $B = \langle g_b \rangle$, where $g_a= (1, 2, \dots ,n-2 , n-1 ,  n)$ and $g_b = g_a^{(n-1,n)}= (1, 2, \dots , n-2 , n ,  n-1)$. Every maximal subgroup of $A_n$ containing $g_a$ or $g_b$ must be transitive of prime degree.
Thus, by the classification of transitive groups of prime degree \cite[p.99 Claims (i)-(v)]{dixon}, the only maximal subgroup of $G$ containing $A$ is $N_G(A) \cong n:\frac{n-1}{2}$, and similarly, the only maximal subgroup of $G$ containing $B$ is $N_G(B) \cong n:\frac{n-1}{2}$.  Suppose $1\neq h \in N_G(A)\cap N_G(B)$. Note that $h$ has order dividing $\frac{n-1}{2}$, since $A$ and $B$ are distinct. Moreover, $h$ preserves a partition of $\{1, \dots n\}$ into two parts of size $\frac{n-1}{2}$, and one part of size 1. In particular, $h$ fixes exactly one point. Relabelling via a conjugate of $g_a$ if necessary, we may suppose that $h$ fixes 1, so now $g_a = (1, 2, \dots ,n-2 , n-1 ,  n)$ and $g_b = g_a^{(k,k+1)}$ with $1\leq k \leq n-1$. 
Hence $g_a^h=(1, (2)h,(3)h,\dots,(n)h$ and the $i^{th}$ position of $g_b^h$ is similarly $(i)h$ unless $i=k$ or $k+1,$ in which case it is $(k+1)h$ and $(k)h,$ respectively.

Since $h$ normalises $\langle g_a\rangle $ and $\langle g_b \rangle$, there exist $1<i,j \leq n-1$ such that $g_a^h = g_a^i$ and $g_b^h = g_b^j$. 


We now fall into one of two cases: either $k=1$, or $k\neq 1$.

Suppose $k=1$. Then $(3)h = 2i+1$, so $(3)h = 2i+1 \mod n$. On the other hand, $(3)h  = j+2 \mod n$, so $j \mod n=2i-1 \mod n$. Now consider $(4)h$. We have $(4)h = 3i+1 \mod n$ and also $(4)h =  2j+2 \mod n = 4i \mod n$. Therefore, $i=j=1$ and $h$ centralises both $g_a$ and $g_b$, a contradiction.

Now suppose $k \neq 1$. Let $\ell \in \{1, \dots , n\}\setminus \{1,k, k+1, (k)h^{-1},(k+1)h^{-1}\}$. Then
\[
(\ell)h =  (\ell-1)i+1 \mod n,
\]
while 
\[
(\ell)h =  (\ell-1)j+1 \mod n,
\]
implying that $i=j$. 

Since $g_b=g_a^{(k,k+1)},$ also $g_b^i=(g_a^i)^{(k, k+1)}=(g_a^h)^{(k,k+1)}.$ Without loss of generality we can assume that there is  $\ell'>\ell''>1$ such that $(\ell')h=k$ and $(\ell'')h=k+1.$ Note that unless $i=1,$ $\ell'-\ell''\geq 2,$ so at least one of them is not $k$ or $k+1.$ Now $g_b^h=(1,\dots, (\ell'-1)h,(\ell')h,(\ell'+1)h,\dots, (\ell''-1)h,(\ell'')h,(\ell''+1)h,\dots)$ and also, $g_b^h=(g_a^h)^{(k,k+1)}=1,\dots, (\ell'-1)h,(\ell'')h,(\ell'+1)h,\dots, (\ell''-1)h,(\ell')h,(\ell''+1)h,\dots).$ Hence $(\ell')h=(\ell'')h,$ which is a contradiction.  

It follows $h$ does not exist, $ N_G(A)\cap N_G(B)=\{1\}$ and therefore $P$ and $Q$ are at distance at least 4 in $\Delta_G$ and so $\mathrm{diam}(\Delta_G) = 4$. \par 
\end{proof}
Now we consider the cases when $n=11$ or $(q^d-1)/q-1$ for some prime $q.$ We first prove that the intersection graph of $A_{11}$ has diameter 3.

\begin{lemma}\label{a11}
$\mathrm{diam}(\Delta_{A_{11}})=3.$
\end{lemma}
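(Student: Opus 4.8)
The plan is to verify the claim $\mathrm{diam}(\Delta_{A_{11}})=3$ by a direct computation combined with the structural reduction afforded by Lemma~\ref{lem:diam}. Since $A_{11}$ is simple, its intersection graph is connected by Shen's result, and since $11$ is composite-free but $11 = (q^d-1)/(q-1)$ has no solution yet $11$ is prime, we know from Cs\'ak\'any--Poll\'ak that $\mathrm{diam}(\Delta_{A_{11}}) \geq 3$; so the real content is the upper bound $\mathrm{diam}(\Delta_{A_{11}}) \leq 3$. By Lemma~\ref{lem:diam} it suffices to show that any two cyclic subgroups $A = \langle g_a\rangle$, $B = \langle g_b\rangle$ of prime order in $A_{11}$ are joined by a path of length at most $3$ in $\Delta_{A_{11}}$.

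First I would reduce the problem to finitely many cases. The prime orders occurring among elements of $A_{11}$ are $2,3,5,7,11$. The only delicate case is when both $A$ and $B$ have order $11$, since an element of order $11$ is an $11$-cycle whose only overgroups among maximal subgroups are the transitive ones of prime degree $11$ — namely $M_{11}$, $\mathrm{PSL}_2(11)$, and the normalizer $11{:}5$ — by the classification of transitive groups of prime degree \cite[p.99]{dixon}. For any prime-order subgroup $X$ of order $\neq 11$, $X$ fixes at least one point, hence lies in a point stabilizer $A_{10}$; two such subgroups fixing a common point are then at distance $2$, and in general one can use a fixed-point/support argument or a short {\sf GAP} \cite{GAP4} check to bound the distance by $3$. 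So the crux is: given two subgroups $A, B$ of order $11$, exhibit a proper subgroup $C$ with $A \cap C \neq 1$ and $C \cap B \neq 1$, or a path $A - C - D - B$.

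The cleanest approach is computational: using {\sf GAP} \cite{GAP4} (or {\sc Magma} \cite{magma}), enumerate the conjugacy classes of cyclic subgroups of prime order in $A_{11}$ — there are very few up to conjugacy — pick class representatives, and for each pair of representatives $\langle g_a \rangle$, $\langle g_b\rangle$ compute the distance in $\Delta_{A_{11}}$ directly, checking it never exceeds $3$. Since distance is a conjugacy-invariant notion only up to the action on pairs, one should be slightly careful: fix a representative $A$ of the unique class of subgroups of order $11$ (all Sylow $11$-subgroups are conjugate), and let $B$ range over a set of $N_{A_{11}}(A)$-orbit representatives of subgroups of order $11$; for each, verify a path of length $\leq 3$ exists, for instance by finding maximal subgroups $M \supseteq A$ and $M' \supseteq B$ with $M \cap M' \neq 1$, or an intermediate subgroup. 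The maximal subgroups $M_{11}$ and $\mathrm{PSL}_2(11)$ have order divisible by many primes and intersect many other subgroups nontrivially, which makes such a path easy to locate in practice.

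The main obstacle is the order-$11$ versus order-$11$ case, precisely because $11$-cycles sit in so few maximal subgroups: if two Sylow $11$-subgroups $A, B$ lie in maximal subgroups that pairwise intersect trivially, one would be forced up to distance $4$, exactly as in the $A_{23}$ argument in the proof of Theorem~\ref{thm:An}. The reason this does \emph{not} happen for $n=11$ is the presence of the extra maximal overgroups $M_{11}$ and $\mathrm{PSL}_2(11)$ (absent for $A_{23}$, where only $M_{23}$ and the affine normalizer occur), so one can always route a path through an $M_{11}$ or a $\mathrm{PSL}_2(11)$ containing the other subgroup; two copies of $\mathrm{PSL}_2(11)$, or an $M_{11}$ and a $\mathrm{PSL}_2(11)$, or two point stabilizers, will meet nontrivially. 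I would make this precise with the {\sf GAP} \cite{GAP4} computation just described, and record that in every case a path of length at most $3$ is found, which together with the lower bound gives $\mathrm{diam}(\Delta_{A_{11}}) = 3$.
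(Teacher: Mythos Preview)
Your computational plan would succeed, but it is far heavier than necessary and misses the one-line argument the paper uses. The paper simply observes that every element of order $11$ lies in a copy of $M_{11}$, and since $|M_{11}|^2 = 7920^2 > 19\,958\,400 = |A_{11}|$, any two such copies $H_1,H_2$ satisfy $|H_1H_2| = |H_1||H_2|/|H_1\cap H_2| \le |A_{11}|$, forcing $H_1\cap H_2 \neq 1$; this immediately gives a path of length at most $3$ between any two Sylow $11$-subgroups, with no computation needed. Your approach buys nothing extra and costs a full orbit enumeration in {\sf GAP}.

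Two factual corrections are also in order. Neither $\mathrm{PSL}_2(11)$ nor $11{:}5$ is maximal in $A_{11}$: both sit inside $M_{11}$, so the only maximal overgroups of a Sylow $11$-subgroup are the two classes of $M_{11}$. Consequently your explanation of why $n=11$ behaves differently from $n=23$ is off. It is not that there are \emph{more} maximal overgroups at $n=11$; in both cases the relevant maximal overgroups are Mathieu groups. The difference is purely numerical: $|M_{11}|^2 > |A_{11}|$ forces non-trivial intersection, whereas $|M_{23}|^2 \ll |A_{23}|$, so two copies of $M_{23}$ can (and, as verified in the paper, do) meet trivially.
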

\begin{proof}
By Lemma \ref{lem:diam}  and \cite[p.246]{csakany}, we can assume that if there exists a pair of subgroups at distance 4, then they are two cyclic groups $A$, $B$ of order 11. 

Every element of order 11 is contained in an $M_{11}$, and $|M_{11}|=7920$.
If $A$ and $B$ are at distance 4, then the $M_{11}s$ (call them $H_1,H_2$) must meet only in the identity. But $|H_1H_2|\geq |H_1||H_2|/|H_1\cap H_2| = |H_1||H_2| = 7920^2 > |A_{11}|$, a contradiction. So $H_1$ and $H_2$ must intersect non-trivially, and $P$ and $Q$ are at distance 3.
\end{proof}

Finally we show that for $A_n,$ where $n$ is a prime of the form $(q^d-1)/q-1$ for some prime $q,$ there are examples where the intersection graph has diameter $3$ and an example where it is $4$.

\begin{lemma}
    The intersection graphs of $A_5$ and $A_7$ have diameter $3$ and the intersection graph of $A_{13}$ has diameter $4.$
\end{lemma}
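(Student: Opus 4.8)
The plan is to treat $A_5$ and $A_7$ (diameter $3$) by an elementary argument and $A_{13}$ (diameter $4$) by a structural reduction followed by a short machine computation. Throughout, the bounds $3\le\mathrm{diam}(\Delta_{A_n})\le 4$ of Cs\'ak\'any and Poll\'ak are available, so for $A_5$ and $A_7$ it suffices to prove $\mathrm{diam}\le 3$, while for $A_{13}$ it suffices to exhibit a single pair of subgroups at distance at least $4$ (that pair is then at distance exactly $4$). By Lemma~\ref{lem:diam} we may restrict to cyclic subgroups of prime order.

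For $A_5$ and $A_7$, note that a prime-order element of $A_n$ which is not an $n$-cycle is a product of disjoint cycles of length $<n$, hence fixes a point and lies in a point stabiliser $\cong A_{n-1}$; an $n$-cycle generates a subgroup lying in a maximal subgroup $M$, where $M=N_{A_5}(\langle g\rangle)\cong D_{10}$ if $n=5$ and $M\cong\mathrm{PSL}_3(2)$ if $n=7$ (by the classification of transitive groups of prime degree \cite{dixon}, these are the only maximal subgroups containing an $n$-cycle). Thus any two prime-order subgroups $\langle x\rangle,\langle y\rangle$ lie in maximal subgroups $M_x,M_y$ with $|M_x||M_y|>|G|$ (the products that arise are $|A_4|^2,|A_4||D_{10}|,|D_{10}|^2$ for $A_5$ and $|A_6|^2,|A_6||\mathrm{PSL}_3(2)|,|\mathrm{PSL}_3(2)|^2$ for $A_7$, each exceeding $|A_5|=60$, resp. $|A_7|=2520$), so $M_x\cap M_y\ne1$ and $\langle x\rangle-M_x-M_y-\langle y\rangle$ is a path of length $\le3$. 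Hence $\mathrm{diam}(\Delta_{A_5})=\mathrm{diam}(\Delta_{A_7})=3$; as $A_5,A_7$ are small, this can equally well be checked directly with {\sc Magma}.

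For $A_{13}$, the same argument shows that a prime-order subgroup not generated by a $13$-cycle lies in some $A_{12}$, and since $|A_{12}||M|>|A_{13}|$ for each maximal subgroup $M$ containing a $13$-cycle, it is at distance $\le3$ from every other prime-order subgroup; so only pairs of $13$-cycle subgroups can realise distance $4$. Now $13=(3^3-1)/(3-1)$, and by the classification of transitive groups of prime degree the maximal subgroups of $A_{13}$ containing a fixed $13$-cycle $g$ are precisely $N_{A_{13}}(\langle g\rangle)\cong 13{:}6$ together with those conjugates of $\mathrm{PSL}_3(3)$ (in its degree-$13$ action on $\mathbb{P}^2(\mathbb{F}_3)$) that contain $g$; comparing $|A_{13}|$, $|\mathrm{PSL}_3(3)|$ and the number $1728$ of order-$13$ elements of $\mathrm{PSL}_3(3)$ shows there are exactly four of the latter, so $g$ lies in exactly five maximal subgroups. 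I would then use {\sc Magma} or {\sf GAP} to write down two explicit $13$-cycles $g_a,g_b$ and verify that each of the five maximal subgroups containing $g_a$ intersects each of the five containing $g_b$ trivially. This is precisely the condition that $\langle g_a\rangle$ and $\langle g_b\rangle$ be at distance $\ge4$ (it forces in particular $\langle g_a,g_b\rangle=A_{13}$), whence $\mathrm{diam}(\Delta_{A_{13}})=4$. A random choice of $g_a,g_b$ ought to work, as all the intersections to be checked are between subgroups small relative to $|A_{13}|$.

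The main obstacle is this final verification. Unlike the generic prime case of Theorem~\ref{thm:An} and the $A_{23}$ case --- where the $n$-cycle normaliser sits inside the large maximal subgroup and is therefore not itself maximal --- here $13{:}6$ is maximal in $A_{13}$, and moreover $|\mathrm{PSL}_3(3)|^2<|A_{13}|$ and $|13{:}6|\,|\mathrm{PSL}_3(3)|<|A_{13}|$, so no order comparison can rule out a path of length $3$ through these subgroups. One must therefore pin down all (finitely many, but more numerous than in the $A_{23}$ case) maximal overgroups of a $13$-cycle precisely, and verify by computer that, for a suitable pair of $13$-cycles, all of them intersect pairwise trivially.
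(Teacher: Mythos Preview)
Your proposal is correct and follows essentially the same approach as the paper: an order-comparison argument for $A_5$ and $A_7$ (maximal overgroups of prime-order elements have product of orders exceeding $|G|$, forcing nontrivial intersection), and a computer verification for $A_{13}$. The paper is terser---it cites \cite[p.~246]{csakany} to reduce directly to $n$-cycles rather than handling non-$n$-cycle prime elements via point stabilisers as you do, and for $A_{13}$ it simply exhibits two explicit $13$-cycles and checks in {\sf GAP} that their maximal overgroups intersect trivially, without your preliminary enumeration of those overgroups---but the underlying ideas are the same.
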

\begin{proof}
By By Lemma \ref{lem:diam}  and \cite[p.246]{csakany} we can assume that if there exists a pair of subgroups at distance 4, then they are two cyclic groups $A$, $B$ of order n.
For $n=5$, the size a of a maximal subgroup $M$ containing an $n$-cycle is at least $10$, and since $10^2 > 60$, these all need to intersect, so there are no cyclic subgroups of order $n$ that are distance $4$ apart from each other. Similarly for $n=7$, if $M$  is a maximal subgroup containing an $n$-cycle then $\vert M \vert \geq 120$ and $120^2> \frac{7!}{2}$ so the diameter is again $3$.  
For $n=13,$ we show using {\sf GAP} \cite{GAP4}  that the subgroups generated by $g_a=(1,8,10,13,7,5,6,12,9,11,3,4,2)$ and $g_b=(1,2,3,4,5,6,7,8,9,10,11,12,13)$ are of distance $4$ away from each other by intersecting the maximal subgroups they are contained in, so the diameter of the intersection graph is $4.$
\end{proof}

The question of the diameter of the intersection graph of $A_n$, with $n=(q^d-1)/(q-1)$ with $q$ a prime power and $d\geq 2$ remains open in general.



\bibliographystyle{plain}
\bibliography{refs.bib}

\end{document}